\newtheorem{theo}{Theorem}[section]
\newtheorem{lem}[theo]{Lemma}
\newtheorem{cla}{Claim}
\newcommand{\F}{{\mathcal{F}}}
\begin{document}
\title{On the anti-Ramsey numbers of linear forests\thanks{This work is supported by is supported by the Youth Program of National Natural Science Foundation of China (No. 11901554)}}

\author{Tian-Ying Xie\\
		\small School of Mathematical Sciences\\
		\small University of Science and Technology of China\\
		\small Hefei, Anhui, 230026, China.\\
		\small  xiety@mail.ustc.edu.cn
		\and
		Long-Tu Yuan\\
		\small School of Mathematical Sciences\\
		\small East China Normal University\\
		\small Shanghai, 200241, China.\\
		\small ltyuan@math.ecnu.edu.cn\\
	}
\date{}
\maketitle
\bibliographystyle{plain}

\begin{abstract}
For a fixed graph $F$, the $\textit{anti-Ramsey number}$, $AR(n,F)$, is the maximum number of colors in an edge-coloring of $K_n$ which does not contain a rainbow copy of $F$. In this paper, we determine the exact value of anti-Ramsey numbers of linear forests for sufficiently large $n$, and show the extremal edge-colored graphs. This answers a question of Fang, Gy\H{o}ri, Lu and Xiao.
\end{abstract}

{{\bf Key words:}  Anti-Ramsey numbers, Linear forests.

{{\bf AMS Classifications:} 05C35.}
\vskip 0.5cm

\section{Introduction}
In this paper, only finite graphs without loops and multiple edges will be considered. Let $K_n$ and $P_n$ be the clique and path on $n$ vertices, respectively. An \textit{even path} or \textit{odd path} is a path on $even$ or $odd$ number of vertices. A \textit{linear forest} is a forest whose  components are paths. For a given graph $G=(V(G),E(G))$, if $v\in V(G)$ is a vertex of $G$, let $N_G(v)$, $d_G(v)$ be the neighborhood and degree of $v$ in graph $G$ respectively, and if $U\subseteq V$, let $N_U(v)$ be the neighborhood of $v$ in $U$, and $N_G(U)$ be the common neighborhood of $U$ in $G$. An \textit{universal vertex} is a vertex in $V(G)$ which is adjacent to all other vertices in $V(G)$. Denote the minimum degree of $G$ by $\delta(G)$. Let $W$ and $U$ be two subsets of $V(G)$, denote the induced subgraph on $W$ of $G$ by $G[W]$, denote the subgraph of $G$ with vertex set $U\cup W$ and edge set $E(U,W)=\{uw\in E(G), u\in U, w\in W\}$ by $G[U,W]$.
An \textit{edge-colored graph} is a graph $G=(V(G),E(G))$ with a map $c:E(G)\rightarrow S$. The members in $S$ are called colors.
A subgraph of an edge-colored graph is \textit{rainbow} if its all edges have different colors.   The \textit{representing graph} of an edge-colored graph $G$ is a spanning subgraph of $G$ obtained by taking one edge of each color in $c$. Denoted by $\mathcal{R}(c,G)$ the family of representing subgraphs of an edge-colored graph $G$ with coloring $c$.

For a fixed graph $F$ and an integer $n$, the $\textit{anti-Ramsey number}$ of $F$ is the maximum number of colors in an edge-coloring of $K_n$ which does not contain $F$ as a rainbow subgraph, and denote it by AR$(n,F)$. The anti-Ramsey number was introduced by Erd\H os, Simonovits and S\'{o}s \cite{erdossim1975} in 1975. They determined the anti-Ramsey numbers of cliques when $n$ is sufficiently large. Later, in 1984, Simonovits and S\'{o}s \cite{simsos1984anpath} determined the anti-Ramsey number of paths.
\begin{theo}(Simonovits and S\'{o}s,\cite{simsos1984anpath})\label{thmpath}
Let $P_k$ be a path on $k$ vertices with $k\geq 2$. If $n$ is sufficiently large, then
\begin{equation*}
  \emph{AR}(n,P_k)=\binom{\lfloor \frac{k-1}{2}\rfloor-1}{2}+\left(\left\lfloor \frac{k-1}{2}\right\rfloor-1\right)\left(n-\left\lfloor \frac{k-1}{2}\right\rfloor  +1\right)+1+\varepsilon,
\end{equation*}
where $\varepsilon=1$ if $k$ is even and $\varepsilon=0$ otherwise.
\end{theo}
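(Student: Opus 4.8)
The plan is to prove matching lower and upper bounds: the lower bound by an explicit colouring, and the upper bound by analysing a representing subgraph of the coloured $K_n$. Throughout write $t=\lfloor\frac{k-1}{2}\rfloor-1$, so the claimed value is $f(n):=\binom{t}{2}+t(n-t)+1+\varepsilon$.

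\emph{Lower bound.} I would partition $V(K_n)=C\cup I$ with $|C|=t$ and let $G_0$ be the graph on $V(K_n)$ consisting of all edges meeting $C$, together with one further edge $e^{\ast}$ inside $I$ when $k$ is even (and no such edge when $k$ is odd). Colour each edge of $G_0$ with a private colour and all remaining edges (those inside $I$ other than $e^{\ast}$) with one extra colour, for $e(G_0)+1=f(n)$ colours in total. A rainbow $P_k$ would use $k-1$ distinctly coloured edges, hence at most one edge outside $G_0$; deleting it leaves inside $G_0$ a linear forest on $k$ vertices with at most two components. Deleting the at most $t$ of its vertices that lie in $C$ leaves a linear forest on at least $k-t$ vertices inside $G_0-C$; since $G_0-C$ has at most $\varepsilon$ edges, this forest has at least $k-t-\varepsilon$ components, while deleting at most $t$ vertices from a two-component linear forest leaves at most $t+2$ components. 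Since $k-t-\varepsilon>t+2$ — exactly the inequality that pins down $t=\lfloor\frac{k-1}{2}\rfloor-1$ and the parity-dependence of $\varepsilon$ — this is impossible, so there is no rainbow $P_k$ and $AR(n,P_k)\ge f(n)$.

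\emph{Upper bound.} Suppose $c$ colours $E(K_n)$ with $N$ colours and no rainbow $P_k$; I want $N\le f(n)$ for $n$ large. Take a representing subgraph $G$ (so $e(G)=N$): since any $P_k$ in $G$ is rainbow, $G$ is $P_k$-free, and the Erd\H{o}s--Gallai theorem already gives $N\le\frac{(k-2)n}{2}$. To sharpen this I would appeal to the structure theory of $P_k$-free graphs of near-extremal size (Erd\H{o}s--Gallai, Kopylov, Balister--Gy\H{o}ri--Lehel--Schelp): after removing $O(1)$ vertices and a bounded number of bounded components, a $P_k$-free graph with more than $f(n)$ edges must contain either a clique $K_{k-1}$, or a large "book" $K_t+\overline{K}_{n-t}$ plus a few edges inside the independent part. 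A component housing a $K_{k-1}$ is impossible once $n$ is large: such a clique has many rainbow Hamilton paths, and a double count over these together with the $\Omega(n)$ vertices outside the component exhibits an edge leaving the clique whose colour is new, giving a rainbow $P_k$. Excluding these dense pieces, $G$ sits inside $K_t+\overline{K}_{n-t}$ together with a few extra edges inside the independent part; a direct check shows that, according to the parity of $k$, at most $1+\varepsilon$ such extra edges can be present without creating a path on $k$ vertices (two disjoint extra edges already yield $P_{2t+3}$, three yield $P_{2t+4}$). Hence $N=e(G)\le\binom{t}{2}+t(n-t)+1+\varepsilon$.

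\emph{Remarks on organisation and the main obstacle.} One can instead run an induction on $n$: if some vertex meets at most $t$ colours that occur nowhere else, delete it and use $f(n)-f(n-1)=t$; otherwise every vertex has at least $t+1$ private colours, so $N\ge n(t+1)\ge\frac{(k-2)n}{2}$, and a representing subgraph either already contains $P_k$ or (for even $k$, where equality can occur) is extremal for Erd\H{o}s--Gallai and one extends a rainbow Hamilton path of one extremal clique into another component. Either way the real difficulty is the structural step: proving that a $P_k$-free representing subgraph with $>f(n)$ edges must harbour a dense bounded piece, ruling that piece out by the rotation-and-exit argument (where largeness of $n$ is essential), and carrying out the exact edge count that produces the additive $1+\varepsilon$ and the floor in $t=\lfloor\frac{k-1}{2}\rfloor-1$. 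Verifying the precise threshold "$n$ sufficiently large" is a further routine but unglamorous task.
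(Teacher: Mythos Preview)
The paper does not prove this theorem at all: Theorem~\ref{thmpath} is quoted as a known result of Simonovits and S\'{o}s \cite{simsos1984anpath} and is used only as input (in particular inside the proof of Claim~\ref{claim0}). There is therefore no ``paper's own proof'' to compare your proposal against.

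As for the proposal itself: your lower-bound construction and the counting argument with $t=\lfloor(k-1)/2\rfloor-1$ are correct and cleanly presented; the inequality $k-t-\varepsilon>t+2$ does hold in both parities and certifies that the colouring is rainbow-$P_k$-free. The upper bound, however, is only a plan. The two structural ingredients you invoke --- that a $P_k$-free graph with more than $f(n)$ edges must, for large $n$, contain either a $K_{k-1}$ or essentially a $K_t+\overline{K}_{n-t}$, and that a $K_{k-1}$ component can be ``exited'' via a fresh colour --- are exactly the nontrivial content of the Simonovits--S\'{o}s argument, and you have not supplied proofs of them. The rotation-and-exit step in particular is where ``$n$ sufficiently large'' is genuinely used and where a careless argument can go wrong (one must show that among the many rainbow Hamilton paths of the clique and the many outgoing edges there is a combination producing a new colour at the join). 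Your alternative inductive scheme has the same issue: the base/stopping case is precisely the structural analysis you defer. So the proposal correctly identifies the architecture of a proof and carries out the easy half, but the hard half remains a sketch.
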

Moreover, they have given the unique extremal edge-colorings as following. Let $U$ be a vertex subset of $K_n$ with $|U|=\lfloor \frac{k-1}{2}\rfloor-1$, all the edges which are incident with $U$ have different colors, the all edges of $K_n[V(K_n)-U]$ colored by another one color if $k$ is odd or other two colors otherwise. Denoted by $\mathcal{C}_{P_k}(n)$ the family of above extremal edge-colorings of $K_n$ of $P_k$.

In 2004, Schiermeyer \cite{ingo2004matchings} determined the anti-Ramsey number of matchings for $n\geq 3t+3$.
\begin{theo}(Schiermeyer, \cite{ingo2004matchings})\label{thmmatching}
$\emph{AR}(n,tK_2)=\binom{t-2}{2}+(t-2)(n-t+2)+1$ for all $t\geq 2$ and $n\geq 3t+3$.
\end{theo}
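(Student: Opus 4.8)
The plan is to prove matching lower and upper bounds; the lower bound is a short construction and the upper bound is the real work.

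\textbf{Lower bound.} I would mimic the colouring $\mathcal{C}_{P_k}(n)$: fix a set $U\subseteq V(K_n)$ with $|U|=t-2$, give every edge incident to $U$ its own private colour, and give every edge of $K_n[V(K_n)-U]$ one further colour. This uses exactly $\binom{t-2}{2}+(t-2)(n-t+2)+1$ colours. In any rainbow subgraph the monochromatic clique on $V(K_n)-U$ contributes at most one edge, and all other edges meet $U$, so a rainbow matching has at most $|U|+1=t-1$ edges. Hence there is no rainbow $tK_2$, and $\mathrm{AR}(n,tK_2)\ge\binom{t-2}{2}+(t-2)(n-t+2)+1$.

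\textbf{Upper bound.} I would argue by induction on $t$. The base case $t=2$ says every surjective $2$-colouring of $K_n$ with $n\ge 9$ contains a rainbow $2K_2$: if $uv$ has colour $1$ then all edges missing $\{u,v\}$ have colour $1$, so any colour-$2$ edge meets $\{u,v\}$, and since $n$ is large one finds a colour-$1$ edge disjoint from it. For the inductive step, let $c$ colour $K_n$ with $N:=\binom{t-2}{2}+(t-2)(n-t+2)+2$ colours and suppose there is no rainbow $tK_2$. Choose a representing subgraph $R$ with $\nu(R)$ maximum; if $\nu(R)\ge t$ we are done, so fix a maximum matching $M=\{e_1,\dots,e_s\}$ of $R$ with $s\le t-1$, put $W=V(M)$ and $I=V(K_n)-W$. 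Maximality of $\nu(R)$ forces two facts. First, every edge of $K_n$ lying inside $I$ has a colour used on $M$ (otherwise exchanging it for its colour-mate in $R$ would strictly enlarge $\nu(R)$), so $K_n[I]$ uses at most $s\le t-1$ colours. Second, for each $e_i=x_iy_i$ the graph $R$ cannot contain an edge from $x_i$ to $I$ and an edge from $y_i$ to $I$ reaching two distinct vertices of $I$ (else $(M-e_i)$ together with those two edges is a larger matching of $R$); hence one endpoint of $e_i$, the "poor" one, is incident to at most one edge of $R$ going into $I$.

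Writing $|E(R)|$ as (edges inside $W$) $+$ (edges from $W$ to $I$) $+$ (edges inside $I$), the last term is $0$, the first is $O(t^2)$, and each poor endpoint contributes $O(1)$ to the middle term, leaving at most $s$ "rich" vertices each contributing at most $|I|$; this yields $N\le s|I|+O(t^2)=s(n-2s)+O(t^2)$. The difficulty — and the step I expect to be the main obstacle — is that the bound $s\le t-1$ here is off by one relative to the target: one must upgrade it so that the rich vertices behave, collectively, like a set of only $t-2$ vertices. The mechanism is a second round of exchanges: if two distinct edges $e_i,e_j\in M$ both had a genuinely rich endpoint one can reroute both endpoints into $I$ and then attach an extra edge on the freed poor endpoints (or on a freed endpoint and one of the $\ge n-2t$ vertices of $I$ untouched by everything so far), possibly after also swapping a third edge of $M$, and thereby build a rainbow $tK_2$ unless the colouring restricted to a large clique collapses onto only a few colours; here one uses that $\mathrm{AR}(m,2K_2)=1$ for every $m\ge 5$ to keep propagating. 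Pushing this case analysis through forces the colouring to agree, up to the lower-order terms, with the extremal colouring from the lower-bound construction, and a final \emph{tight} count of the colours incident to the resulting $(t-2)$-set against those lower-order terms then contradicts the extra "$+2$". For the complementary, genuinely sparse situation one can instead delete a pair $\{u,v\}$ incident to very few "private" colours — such a pair exists by double counting, since otherwise $\sum_v a(v)=\Omega(n^2)$ where $a(v)$ is the number of colours all of whose edges meet $v$, contradicting $\sum_v a(v)\le 2N$ — so that $K_n-\{u,v\}$ still carries at least $\mathrm{AR}(n-2,(t-1)K_2)+1$ colours, apply the inductive hypothesis there, and extend the resulting rainbow $(t-1)K_2$ by one fresh-coloured edge on the unused vertices. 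Keeping the arithmetic tight enough to exclude exactly one surplus colour (rather than $O(n)$ surplus colours) is where the work concentrates, and is presumably why the hypothesis $n\ge 3t+3$ is needed.
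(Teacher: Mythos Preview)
This theorem is not proved in the present paper. It is quoted from Schiermeyer~\cite{ingo2004matchings} as background, with no argument given here; the paper's own contribution (Theorem~\ref{thmmain}) concerns linear forests containing an even path and uses entirely different machinery (Lemmas~\ref{lem1}--\ref{lem2}, a minimum-degree reduction, and the structure of representing graphs). So there is no ``paper's own proof'' to compare against.

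As to your proposal itself: the lower bound is correct and standard. For the upper bound, your opening moves are sound --- picking a representing graph $R$ with $\nu(R)$ maximum, deducing that $K_n[I]$ sees only the $s\le t-1$ matching colours, and the rich/poor endpoint dichotomy are exactly the right ingredients. But the argument is only a plan from that point on. You correctly flag the real obstacle (upgrading $s\le t-1$ to an effective $t-2$ so that the count closes with the exact ``$+1$'' rather than an $O(n)$ slack), yet the ``second round of exchanges'' and the deletion-and-induction alternative are described in outline, not executed: you do not specify which edges get swapped, why the freed endpoints can always be re-matched without colour collisions, or how the arithmetic in the induction step actually yields $N-d(u)-d(v)+|\text{shared colours}|\ge \mathrm{AR}(n-2,(t-1)K_2)+1$ under $n\ge 3t+3$. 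These are precisely the places where Schiermeyer's original proof does the work, and until they are written out your argument is a reasonable roadmap but not a proof.
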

And after that, Chen, Li and Tu \cite{chen2009matchings} and Fujita, Magnant and Ozeki \cite{Fujita2010survey} independently showed that AR$(n,tK_2)=\binom{t-2}{2}+(t-2)(n-t+2)+1$ for all $t\geq 2$ and $n\geq 2t+1$.

In 2016, Gilboa and Roditty \cite{gilboa2016anti} determined that for large enough $n$, the anti-Ramsey number of $L\cup tP_2$ and $L\cup kP_3$ when $t$ and $k$ are large enough and $L$ is a graph satisfying some conditions.

Very recently, Fang, Gy\H{o}ri, Lu and Xiao \cite{meilu2019anti} have given an approximate value of anti-Ramsey number of linear forests and determined the anti-Ramsey number of linear forests whose all components are odd paths.
\begin{theo}\label{thmlu}(Fang, Gy\H{o}ri, Lu and Xiao, \cite{meilu2019anti})
Let $F=\cup_{i=1}^{k} P_{t_i}$ be a linear forest, where $k\geq 2$, and $t_i\geq 2$ for all $1\leq i\leq k$. Then
\begin{equation*}
  \emph{AR}(n,F)=\left(\sum_{i=1}^k \left\lfloor \frac{t_i}{2} \right\rfloor -\epsilon\right)n+O(1),
\end{equation*}
where $\epsilon=1$ if all $t_i$ are odd and $\epsilon=2$ otherwise.
\end{theo}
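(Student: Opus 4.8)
\medskip\noindent
The plan is to establish the matching bounds $\mathrm{AR}(n,F)\ge(s-\epsilon)n+O(1)$ and $\mathrm{AR}(n,F)\le(s-\epsilon)n+O(1)$, where $s=\sum_{i=1}^{k}\lfloor t_i/2\rfloor$; throughout, write $a$ for the number of indices $i$ with $t_i$ even, so that $\epsilon=1$ exactly when $a=0$.

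\emph{Lower bound.} I would fix a set $U\subseteq V(K_n)$ with $|U|=s-\epsilon$, assign every edge meeting $U$ its own private colour, and colour $K_n[V\setminus U]$ with $\gamma-1$ additional colours, where $\gamma:=\sum_{i=1}^{k}(t_i-1)-g(s-\epsilon)$ and $g(\ell)$ denotes the largest number of edges of $F$ that can be covered by $\ell$ vertices. The crux is the computation of $g$: since $P_{t_i}$ has vertex cover number $\lfloor t_i/2\rfloor$, and on a path a newly chosen vertex covers two previously uncovered edges except for the last chosen vertex of an even path (which covers only one), one gets $g(\ell)=2\ell$ for $0\le\ell\le s-a$ and $g(\ell)=\ell+(s-a)$ for $s-a\le\ell\le s$. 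Substituting $\ell=s-\epsilon$ gives $\gamma=2$ when $a\ne1$ and $\gamma=3$ when $a=1$, so $\gamma-1\in\{1,2\}$. Now any copy of $F$ in this coloured $K_n$ has at most $g(s-\epsilon)=\sum_i(t_i-1)-\gamma$ of its edges incident with $U$, hence at least $\gamma$ of its edges inside $V\setminus U$; but only $\gamma-1$ colours occur there, so the copy cannot be rainbow. The number of colours used is $\binom{s-\epsilon}{2}+(s-\epsilon)(n-s+\epsilon)+(\gamma-1)=(s-\epsilon)n+O(1)$, as required. (For $k=1$ this specialises to the Simonovits--S\'{o}s colouring of Theorem~\ref{thmpath}, and for $F=kP_2$ to the colouring behind Theorem~\ref{thmmatching}.)

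\emph{Upper bound.} Suppose $K_n$ is edge-coloured with $N$ colours and has no rainbow $F$, and let $R$ be a representing graph, so that $e(R)=N$ and $R$ is $F$-free. By the Tur\'{a}n numbers of linear forests (Bushaw--Kettle; Lidick\'{y}--Liu--Palmer; Yuan--Zhang), $N\le\mathrm{ex}(n,F)=(s-1)n+O(1)$ unless every component of $F$ is a $P_3$; this already proves the theorem when $a=0$ and $F\ne kP_3$. The substantive part --- and the main obstacle --- is to gain an additional $\Theta(n)$ in the colour count when $a\ge1$ (and also to handle $F=kP_3$, for which $\mathrm{ex}(n,F)$ carries a near-perfect-matching term). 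For this I would use stability: once $N$ is within $O(1)$ of the target value, the $F$-free graph $R$ must be close to an extremal configuration, so there is a set $W$ with $|W|\le s-1$ --- and in fact $|W|\le s-2$ as soon as some $t_i$ is even, by essentially the covering count used in the lower bound --- such that $R-W$ is sparse. The colours whose $R$-representative meets $W$ number at most $(s-\epsilon)n+O(1)$, and if more than boundedly many colours were realised only by edges lying inside $V\setminus W$, then attaching $s-\epsilon$ vertex-disjoint rainbow paths to $W$ and finishing off the remaining even path with one of those internal colours would produce a rainbow $F$, a contradiction; if on the other hand $R$ is far from extremal, then $R$ itself already contains $F$. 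Turning this into a proof requires an iterative ``grow a rainbow sub-linear-forest of $F$ and then extend it'' argument with careful accounting of how many colours each extension consumes, and pinning down the exact additive constant needs the finer analysis carried out in the present paper; for the approximate statement of Theorem~\ref{thmlu} the stability version suffices.

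The delicate regime is $(s-2)n\le N\le(s-1)n$ (up to $O(1)$) in the case $a\ge1$: here $R$ need not be close to the true extremal graph, so one has to combine the structure of dense $F$-free graphs with pigeonhole over colour classes in order either to exhibit a rainbow $F$ or to bound $N$ --- and it is precisely at this point that the presence of an even component is essential.
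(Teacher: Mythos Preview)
The statement you are attempting to prove, Theorem~\ref{thmlu}, is not proved in this paper at all: it is quoted from Fang, Gy\H{o}ri, Lu and Xiao~\cite{meilu2019anti} as background. The paper's own contribution is the \emph{exact} value in Theorem~\ref{thmmain} (for the case where at least one $t_i$ is even), which of course implies the approximate statement in that case. So there is no ``paper's own proof'' of Theorem~\ref{thmlu} to compare your attempt against.

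That said, your lower-bound construction is correct and coincides with the colouring family $\mathcal{C}_F(n)$ that the paper identifies as extremal in Theorem~\ref{thmmain}; your covering computation of $g(\ell)$ and the resulting value of $\gamma\in\{2,3\}$ are right and give a clean self-contained reason why $\mathcal{C}_F(n)$ contains no rainbow $F$ (the paper simply asserts this).

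Your upper bound, by contrast, is only a sketch, and you say so yourself. The trivial bound $N\le\mathrm{ex}(n,F)$ from a representing graph settles the all-odd case with $F\ne kP_3$, but gaining the extra $n$ when some $t_i$ is even (or when $F=kP_3$) is the entire difficulty, and ``use stability, then attach rainbow paths to $W$'' hides all of it. In particular, the step ``in fact $|W|\le s-2$ as soon as some $t_i$ is even, by the covering count'' is not justified: stability for $\mathrm{ex}(n,F)$ gives you a set $W$ of size $s-1$, not $s-2$, and an $F$-free representing graph with $(s-3/2)n$ edges need not be near-extremal at all, so the argument as written does not cover the full range $(s-2)n\le N\le(s-1)n$ you yourself flag as delicate.

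For comparison, the paper's route to the sharper Theorem~\ref{thmmain} is structurally different from your stability outline: it (i) reduces to the case where \emph{every} representing graph has minimum degree at least $s-2$, by iteratively deleting low-degree vertices and tracking the colour count (this is the short proof of Theorem~\ref{thmmain} from Lemma~\ref{lem2}); and (ii) under that minimum-degree hypothesis, uses an edge-counting lemma (Lemma~\ref{lem1}) to locate small sets $U,W$ with large common neighbourhoods inside a representing graph (Claim~\ref{claim0}), merges them (Claim~\ref{claim2}), and then argues directly that at most $1+\varepsilon$ colours can occur on $K_n[V\setminus(U\cup W)]$ (Claim~\ref{claim3}). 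This is a constructive argument inside the edge-colouring rather than a stability argument on one representing graph, and it is what allows the paper to pin down the extremal colourings exactly; your approach, even if the gaps were filled, would naturally yield only the $O(1)$ statement.
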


For a given graph family $\F$, the \textit{Tur\'{a}n number} of $\F$ is the maximum number of edges of a graph on $n$ vertices which does not contain a copy of any graph in $\F$ as a subgraph, denote it by ex$(n,\F)$.

The anti-Ramsey problem of linear forest is strongly connected with the Tur\'{a}n number of linear forest. Hence, we introduce some results of the Tur\'{a}n numbers of paths and linear forests.

In 1959, Erd\H{o}s and Gallai showed the upper bound of the Tur\'{a}n number of $P_k$ as the following theorem.
\begin{theo}\label{eg}(Erd\H{o}s and Gallai, \cite{erdos1959path})
For any integers $k,n\geq1$, we have \emph{ex}$(n,P_k)\leq \frac{k-2}{2}n.$
\end{theo}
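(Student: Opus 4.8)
The plan is to prove, by induction on $n$, the equivalent statement that every $P_k$-free graph $G$ on $n$ vertices has $e(G)\le \frac{k-2}{2}n$. When $n\le k-1$ this is immediate from $e(G)\le\binom{n}{2}\le\frac{k-2}{2}n$, so I may assume $n\ge k$.

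The first reduction is to connected graphs. If $G$ is disconnected, with components $C_1,\dots,C_r$ ($r\ge 2$) on $n_1,\dots,n_r$ vertices, then each $C_j$ is a $P_k$-free graph on fewer than $n$ vertices, so the induction hypothesis gives $e(C_j)\le\frac{k-2}{2}n_j$, and summing yields $e(G)\le\frac{k-2}{2}n$. So assume $G$ is connected. If $G$ has a vertex $v$ with $d_G(v)\le\frac{k-2}{2}$, then $G-v$ is $P_k$-free on $n-1$ vertices, and $e(G)=e(G-v)+d_G(v)\le\frac{k-2}{2}(n-1)+\frac{k-2}{2}=\frac{k-2}{2}n$ by induction. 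It remains to treat a connected $G$ with $\delta(G)>\frac{k-2}{2}$, and here I claim one actually contradicts $P_k$-freeness, which completes the induction.

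For the last case the key tool is the classical fact that a connected graph $H$ with $\delta(H)\ge d$ contains a path on at least $\min\{|V(H)|,\,2d+1\}$ vertices. I would prove it by fixing a longest path $P=v_0v_1\cdots v_\ell$ in $H$; by maximality $N_H(v_0)\subseteq V(P)$ and $N_H(v_\ell)\subseteq V(P)$. If $\ell\ge 2d$ the path $P$ already has $2d+1$ vertices. Otherwise $\ell\le 2d-1$, and the sets $A=\{\,i:v_0v_{i+1}\in E(H)\,\}$ and $B=\{\,i:v_iv_\ell\in E(H)\,\}$ are subsets of $\{0,1,\dots,\ell-1\}$, each of size at least $d$; since $\ell\le 2d-1<|A|+|B|$, they share an index $i$. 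Then $v_0v_1\cdots v_i v_\ell v_{\ell-1}\cdots v_{i+1}v_0$ is a cycle $C$ on all $\ell+1$ vertices of $P$. If $C$ were not spanning, connectivity would give an edge from $V(C)$ to a vertex outside, and rerouting through that edge would produce a path on $\ell+2$ vertices, contradicting the maximality of $P$; hence $C$ is spanning and $H$ has a Hamilton path. Applying this with $d=\delta(G)$: the hypothesis $\delta(G)>\frac{k-2}{2}$ forces $2\delta(G)+1\ge k$ (a one-line check in each parity of $k$), and since $n\ge k$ we get a path on at least $k$ vertices in $G$, i.e. a copy of $P_k$ — a contradiction.

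I expect the main obstacle to be the longest-path lemma: the rotation-and-counting argument producing the spanning cycle, together with the careful treatment of its degenerate cases (for instance when the common index $i$ is $0$ or $\ell-1$, so one of the "new" edges is already an edge of $P$). The rest is routine induction bookkeeping, and the parity verification $2\delta(G)+1\ge k$ is immediate.
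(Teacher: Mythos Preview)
Your argument is correct and is essentially the classical proof of the Erd\H{o}s--Gallai path theorem: induct on $n$, peel off a component or a low-degree vertex, and in the remaining connected high-minimum-degree case use the longest-path rotation to produce a cycle through $V(P)$ and hence a long enough path. The parity check $2\delta(G)+1\ge k$ and the degenerate indices $i=0,\ell-1$ cause no trouble, as you note.

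There is nothing to compare against here: the paper does not prove this theorem at all. It is quoted as a known result of Erd\H{o}s and Gallai (with a citation to their 1959 paper) and is used only as a black-box upper bound $\mathrm{ex}(n,P_k)\le\frac{k-2}{2}n$ inside the later edge-counting estimates. So your write-up supplies a proof where the paper simply invokes the literature.
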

The Tur\'{a}n number of linear forest have been determined by Bushaw and Kettle \cite{Bushwa2011} and Lidicky, Liu and Palmer \cite{hliu2013turan} for sufficiently large $n$.

\begin{theo}(Bushaw and Kettle, \cite{Bushwa2011})
Let $k\cdot P_3$ be the vertex disjoint union of $k$ copies of $P_3$. Then for $n\geq 7k$, we have
\begin{equation*}
\emph{ex}(n,k\cdot P_3)=\binom{k-1}{2}+(k-1)(n-k+1)+\left\lfloor\frac{n-k+1}{2}\right\rfloor.
\end{equation*}
\end{theo}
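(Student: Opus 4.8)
The plan is to establish the lower bound by a direct construction and the upper bound $\mathrm{ex}(n,k\cdot P_3)\le f(n,k)$, where
\begin{equation*}
f(n,k)=\binom{k-1}{2}+(k-1)(n-k+1)+\left\lfloor\frac{n-k+1}{2}\right\rfloor
\end{equation*}
is the claimed value, by induction on $k$. For the lower bound, let $H$ be the join of a clique on a $(k-1)$-set $X$ with a maximum matching on the remaining $n-k+1$ vertices; then $e(H)=f(n,k)$, and since $H-X$ is a matching it contains no $P_3$, so every copy of $P_3$ in $H$ meets $X$. As $k$ vertex-disjoint copies of $P_3$ would require $k$ distinct vertices of $X$ while $|X|=k-1$, the graph $H$ is $k\cdot P_3$-free, giving $\mathrm{ex}(n,k\cdot P_3)\ge f(n,k)$.

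For the upper bound, the base case $k=1$ is the classical fact that a $P_3$-free graph is a matching, so $\mathrm{ex}(n,P_3)=\lfloor n/2\rfloor=f(n,1)$. Fix $k\ge 2$ and let $G$ be a $k\cdot P_3$-free graph on $n\ge 7k$ vertices; I want $e(G)\le f(n,k)$. We may assume $G$ contains a $P_3$, since otherwise $e(G)\le\lfloor n/2\rfloor\le n-k+1\le f(n,k)$. Set $D=\lfloor\frac{n+2k-2}{3}\rfloor$ and $L=\{v\in V(G):d_G(v)\le D\}$, and split on whether $G[L]$ contains a $P_3$. If $G[L]$ contains a copy $Q$ of $P_3$, then $G-V(Q)$ is $(k-1)\cdot P_3$-free (adjoining $Q$ to $k-1$ disjoint copies of $P_3$ in $G-V(Q)$ would give $k\cdot P_3$ in $G$) and it has $n-3\ge 7(k-1)$ vertices, so by induction $e(G-V(Q))\le f(n-3,k-1)$. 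Since the three vertices of $Q$ lie in $L$ and span at least two edges, the number of edges of $G$ incident with $V(Q)$ is at most $3D-2\le n+2k-4$, and the arithmetic identity $f(n,k)-f(n-3,k-1)=n+2k-4$ then yields $e(G)\le f(n,k)$.

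It remains to handle the case that $G[L]$ is $P_3$-free, hence a matching. I claim $|V(G)\setminus L|\le k-1$. If not, pick vertices $r_1,\dots,r_k$ with $d_G(r_i)>D$, write $A_i=N_G(r_i)\setminus\{r_1,\dots,r_k\}$ (so $|A_i|\ge D-k+2$), and look for pairwise disjoint pairs $\{c_i,c_i'\}\subseteq A_i$ for $i=1,\dots,k$; these would give $k$ vertex-disjoint copies $c_ir_ic_i'$ of $P_3$, contradicting $k\cdot P_3$-freeness. By the defect form of Hall's theorem (applied to the bipartite graph in which each $r_i$ contributes two vertices both joined to $A_i$), such pairs fail to exist only if some nonempty $S\subseteq\{1,\dots,k\}$ has $|\bigcup_{i\in S}A_i|\le 2|S|-1\le 2k-1$; picking $i_0\in S$ we get $D-k+2\le|A_{i_0}|\le 2k-1$, i.e.\ $D\le 3k-3$, whereas $n\ge 7k$ forces $D\ge 3k-1$, a contradiction. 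Hence $X:=V(G)\setminus L$ has $|X|\le k-1$, and since $G-X=G[L]$ is a matching,
\begin{equation*}
e(G)\ \le\ \binom{|X|}{2}+|X|(n-|X|)+\left\lfloor\frac{n-|X|}{2}\right\rfloor\ \le\ f(n,k),
\end{equation*}
the last inequality because the middle expression is non-decreasing in $|X|$ over $\{0,1,\dots,k-1\}$ for $n\ge 7k$ and equals $f(n,k)$ at $|X|=k-1$. This completes the induction.

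The step I expect to be the main obstacle, and the reason the hypothesis $n\ge 7k$ is imposed, is the extraction of $k$ vertex-disjoint copies of $P_3$ in the last case: it succeeds precisely because $n\ge 7k$ forces $D\ge 3k-1$, which is exactly the amount of slack Hall's condition needs — a greedy extraction would be too wasteful when $k$ is of order $n/7$. Apart from this, the proof uses only Hall's theorem and the characterisation of $P_3$-free graphs, together with the two routine facts about $f$ (the identity $f(n,k)-f(n-3,k-1)=n+2k-4$ and the monotonicity above), which I would verify by direct computation with the floor function.
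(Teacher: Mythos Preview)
The paper does not prove this theorem; it is quoted from Bushaw and Kettle \cite{Bushwa2011} as a background result, so there is no in-paper argument to compare against. That said, your proof is correct and self-contained: the lower-bound construction is the standard one, and your upper-bound induction on $k$ works cleanly. The identity $f(n,k)-f(n-3,k-1)=n+2k-4$ and the monotonicity of $\binom{x}{2}+x(n-x)+\lfloor(n-x)/2\rfloor$ in $x$ on $\{0,\dots,k-1\}$ are both easy to verify, the Hall argument in Case~2 is sound (duplicating each $r_i$ and checking that a violating set may be taken of the form $\{$both copies of $i:i\in S\}$), and the numerics $D=\lfloor(n+2k-2)/3\rfloor\ge 3k-1$ under $n\ge 7k$ give exactly the slack needed to force the contradiction $D\le 3k-3$. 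Your diagnosis that this is where the threshold $n\ge 7k$ is used is accurate.

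I cannot compare your route to Bushaw and Kettle's original argument from this paper alone, but your proof is a valid, essentially elementary derivation of the stated result.
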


\noindent{\bf Remark} Later, Yuan and Zhang \cite{Yuan} determined ex$(n,k\cdot P_3)$ for all values of $k$ and $n$.

\begin{theo}\label{thmex}(Lidicky, Liu and Palmer, \cite{hliu2013turan})
Let $F=\cup_{i=1}^{k} P_{t_i}$ be a linear forest, where $k\geq 2$ and $t_i\geq 2$ for all $1\leq i\leq k$. If at least one $t_i$ is not 3, then for $n$ sufficiently large,
\begin{equation*}
  \emph{ex}(n,F)=\binom{\sum_{i=1}^k \lfloor t_i/2\rfloor-1}{2}+\left(\sum_{i=1}^k \left\lfloor \frac{t_i}{2} \right\rfloor -1\right)\left(n-\sum_{i=1}^k \left\lfloor \frac{t_i}{2}\right\rfloor+1   \right)+c,
\end{equation*}
where $c=1$ if all $t_i$ are odd and $c=0$ otherwise. Moreover, the extremal graph is unique.
\end{theo}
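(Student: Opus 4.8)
\medskip
\noindent{\bf Proof strategy.} Throughout write $s=\sum_{i=1}^{k}\lfloor t_i/2\rfloor$ and $T=\sum_{i=1}^{k}t_i$, and assume $t_1\ge\cdots\ge t_k$. The plan is to prove the lower bound by an explicit construction and the upper bound by a stability-type argument, using induction on $n$ and the Erd\H os--Gallai bound (Theorem~\ref{eg}).

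\medskip
\noindent\emph{Lower bound.} The extremal graph is $G^{\ast}=K_{s-1}\vee\overline{K}_{n-s+1}$, to which one adds a single edge inside the independent set $I$ precisely when every $t_i$ is odd; then $e(G^{\ast})=\binom{s-1}{2}+(s-1)(n-s+1)+c$. To see $G^{\ast}$ is $F$-free, note that in any embedding of $P_{t_i}$ into $G^{\ast}$ the vertices mapped into $I$ induce at most one edge, whereas $P_{t_i}$ has independence number $\lceil t_i/2\rceil$ and, when $t_i$ is odd, has no set of more than $\lceil t_i/2\rceil$ vertices inducing at most one edge; hence each $P_{t_i}$ occupies at least $\lfloor t_i/2\rfloor$ vertices of $K_{s-1}$, so a copy of $F$ would require $s>|K_{s-1}|$ clique vertices. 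The parity condition is forced exactly here: if some $t_j$ is even then $P_{t_j}$ could use the extra edge to occupy $\lfloor t_j/2\rfloor+1$ vertices of $I$, saving a clique vertex, so the edge must be dropped; and the hypothesis ``some $t_i\ne 3$'' is what rules out the denser Bushaw--Kettle-type graph $K_{s-1}\vee M$ with $M$ a near-perfect matching \cite{Bushwa2011}, which already contains $F$ once $F$ has a component different from $P_3$.

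\medskip
\noindent\emph{Upper bound: reductions and the hub set.} Suppose $G$ is $F$-free on $n$ vertices with $e(G)\ge \mathrm{ex}(n,F)+1$; I argue by induction on $n$. Since consecutive Tur\'an numbers differ by $s-1$, deleting a vertex of degree at most $s-2$ would leave more than $\mathrm{ex}(n-1,F)$ edges and the induction hypothesis would then find $F$; thus we may assume $\delta(G)\ge s-1$. Also $G$ contains no path on $T$ vertices, since such a path breaks into vertex-disjoint $P_{t_1},\dots,P_{t_k}$; hence $e(G)\le\frac{T-2}{2}\,n$ by Theorem~\ref{eg}, so $G$ has bounded average degree. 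Fixing $\varepsilon>0$ small in terms of $F$, let $A=\{v\in V(G):d_G(v)\ge(1-\varepsilon)n\}$ be the set of \emph{hubs}; then $|A|$ is bounded in terms of $F$, and any such bounded number of hubs have a common neighbourhood of size at least $n/2$. A greedy alternating embedding shows $|A|\le s-1$: if $|A|\ge s$, build each $P_{t_i}$ with its even positions at $\lfloor t_i/2\rfloor$ distinct hubs and its odd positions at fresh ``leaf'' vertices chosen from the large common neighbourhoods, spending $s$ hubs and a bounded number of leaves to assemble a copy of $F$.

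\medskip
\noindent\emph{Upper bound: the crux.} It remains to prove $e(G)\le\binom{s-1}{2}+(s-1)(n-s+1)+c$, with $G^{\ast}$ the only extremal graph. The organising idea is that edges of $G$ inside $V(G)\setminus A$ are ``productive'': a sub-path of $P_{t_i}$ having $r$ vertices extends to $P_{t_i}$ through roughly $\lceil (t_i-r)/2\rceil$ hubs, so a $P_3$ inside $G-A$ already saves a hub over building $P_{t_i}$ from scratch; a single edge saves a hub when $t_i$ is even; and two independent edges placed inside a common component $P_{t_j}$ with $t_j\ge 5$ save a hub in all cases. Likewise a vertex of $G-A$ of degree $\ge t_1$ can anchor a path routed through $A$ and saves a hub. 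Combining these facts with $|A|\le s-1$ forces, in succession: $G-A$ has bounded maximum degree; $G-A$ is $P_3$-free, hence a matching; $|A|=s-1$; and $e(G-A)\le c$, the bound being $0$ unless all $t_i$ are odd and at most $1$ in that case, since ``some $t_i\ne 3$'' supplies a component of length $\ge 5$. The remaining subcase $|A|\le s-2$ is eliminated by a counting step: then $G-A$ would need $\Omega(n)$ edges to compensate for the missing hub, but a bounded-degree graph with $\Omega(n)$ edges is not a matching and, on removing $P_3$'s greedily, contains $\Omega(n)$ disjoint copies of $P_3$, which tile the components of $F$ (separated by the at most $s-2$ hubs) into a copy of $F$. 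Equality throughout then pins $G$ down to $G^{\ast}$, giving uniqueness. I expect this last paragraph --- eliminating the ``too few hubs, dense complement'' configuration, together with the exact parity bookkeeping that produces $c$ and that crucially uses the exclusion of $F=kP_3$ --- to be the main obstacle.
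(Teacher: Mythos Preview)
The paper does not prove this statement: Theorem~\ref{thmex} is quoted from Lidick\'y, Liu and Palmer \cite{hliu2013turan} and is used as a black box (only the formula for $\mathrm{ex}(n,F)$ and the description of the extremal graph $G_F(n)$ are invoked later, e.g.\ in the proof of Lemma~\ref{lem2}). There is therefore no ``paper's own proof'' to compare your proposal against.

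That said, your outline is broadly the right shape for this Tur\'an result and is close in spirit to the original argument in \cite{hliu2013turan}: the lower-bound construction and the $F$-freeness check are correct (including the parity bookkeeping for $c$ and the reason the hypothesis ``some $t_i\ne 3$'' is needed), and the upper-bound scheme --- minimum-degree reduction to $\delta(G)\ge s-1$, Erd\H os--Gallai to bound average degree, extracting a hub set $A$ of size at most $s-1$, then arguing that $G-A$ must be (nearly) edgeless --- is exactly the standard route. The part you flag as the main obstacle is indeed where the work lies: you will need a more careful accounting than ``a bounded-degree graph with $\Omega(n)$ edges contains $\Omega(n)$ disjoint $P_3$'s'', since the embedding of $F$ through the hubs requires the saved hubs and the $P_3$'s/edges to be compatible with the component sizes $t_i$, and the uniqueness claim requires checking that every vertex of $A$ is genuinely universal. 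None of this is wrong as a plan, but it is a sketch rather than a proof; if you want a complete argument you should consult \cite{hliu2013turan} directly.
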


The extremal graph in Theorem \ref{thmex}, denote by $G_F(n)$, is a graph on $n$ vertices with a vertex set $U$ of order $(\sum_{i=1}^{k}\lfloor t_i/2 \rfloor-1)$ such that all the vertices in $U$ are universal vertices and $G_F(n)-U$ contains a single edge if each $t_i$ is odd or $V(G)-U$ is an independent subset otherwise.

The anti-Ramsey numbers of linear forests which consist of odd paths are determined by Gilboa and Roditty  \cite{gilboa2016anti} for AR$(n,k\cdot P_3)$ and Fang, Gy\H{o}ri, Lu and Xiao \cite{meilu2019anti} otherwise.   In \cite{meilu2019anti}, they asked the following question: determining the exact value of anti-Ramsey number of a linear forest when it contains even paths. We will establish the following theorem.

From now on, let $F=\cup_{i=1}^{k} P_{t_i}$ be a linear forest with at least one $t_i$ is even, where $k\geq 2$, $t_i\geq 2$ for all $1\leq i\leq k$. Define \textit{$\mathcal{C}_F(n)$} to be a family of edge-colorings of $K_n$ with a subset $U$ of order $\sum_{i=1}^k \lfloor t_i/2 \rfloor -2$, the all edges which are incident with $U$ have different colors and the edges in $V(K_n)-U$ are colored by another $1+\varepsilon$ colors, where $\varepsilon=1$ if exactly one $t_i$ is even or $\varepsilon=0$ if at least two $t_i$ are even. (see Figure 1).

\begin{figure}[h]
\begin{center}
\includegraphics[width=0.70\textwidth]{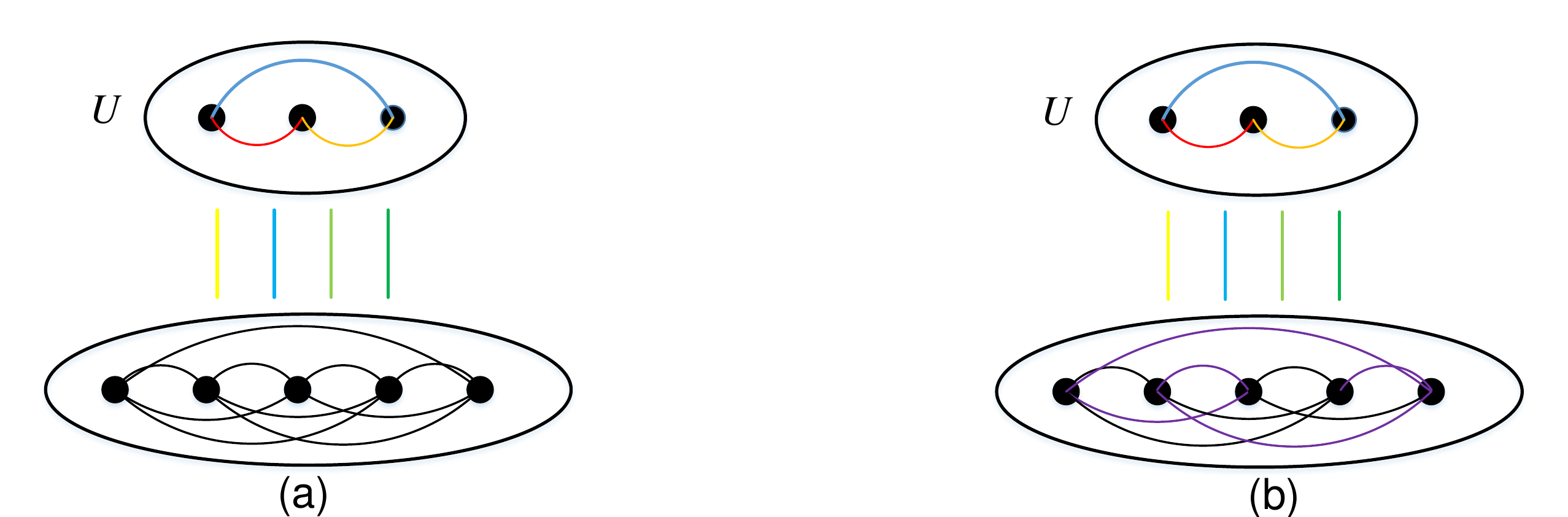}
\caption{$\mathcal{C}_F(n)$, (a) when $F$ contains at least two components with even vertices; (b) when $F$ contains exact one component with even vertices}
\end{center}
\end{figure}

\begin{theo}\label{thmmain}
There is a function $f(t_1,\ldots,t_k)$ such that  if $n\geq f(t_1,\ldots,t_k)$, then
$$\emph{AR}(n,F)=\binom{\sum_{i=1}^k \lfloor t_i/ 2\rfloor -2}{2}+ \left(\sum_{i=1}^k \left\lfloor \frac{t_i}{2} \right\rfloor -2\right)\left(n-\sum_{i=1}^k \left\lfloor \frac{t_i}{2} \right\rfloor +2 \right)+1+\varepsilon,$$
where $\varepsilon=1$ if exactly one $t_i$ is even or $\varepsilon=0$ if at least two $t_i$ are even. Moreover, the extremal edge-colorings must be in $\mathcal{C}_F(n)$.
\end{theo}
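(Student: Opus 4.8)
Write $s:=\sum_{i=1}^{k}\lfloor t_i/2\rfloor$ and let $g(n):=\binom{s-2}{2}+(s-2)(n-s+2)+1+\varepsilon$ denote the claimed value.

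\emph{Lower bound.} Each coloring $c\in\mathcal{C}_F(n)$ uses exactly $g(n)$ colors, so it remains to show it has no rainbow $F$. A rainbow subgraph uses at most one edge of each color, hence lies inside some $R\in\mathcal{R}(c,K_n)$, and every such $R$ equals $K_{s-2}\vee B$, where $B$ is a graph on the remaining $n-s+2$ vertices with $|E(B)|\le 1+\varepsilon$; so it suffices that $K_{s-2}\vee B$ is $F$-free. This rests on a \emph{saving lemma}: in any embedding of a path $P_t$ into a join $K_a\vee B$, the vertices mapped into $B$ induce in the path a vertex-disjoint union of subpaths, each of which maps to a path of $B$, and counting the vertices forced into the clique part shows that $P_t$ uses at least $\lfloor t/2\rfloor$ clique vertices unless it ``spends'' edges of $B$, where spending $e\le 2$ edges on a single $P_t$ lowers this count by at most $1$, and spending one edge on an odd $P_t$ lowers it by $0$. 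Summing over the components of $F$: if $\varepsilon=0$ then $B$ has one edge, which benefits at most one component, necessarily even, so $F$ requires at least $s-1$ clique vertices; if $\varepsilon=1$ then $B$ has two edges but $F$ has a unique even component, and since two edges sharing a vertex cannot be spent by two disjoint paths while two disjoint edges spent by a single path still save only $1$, again $F$ requires at least $s-1>s-2$ clique vertices. Hence $K_{s-2}\vee B$ is $F$-free, proving $\mathrm{AR}(n,F)\ge g(n)$.

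\emph{Upper bound: the reduction.} Let $c$ be an edge-coloring of $K_n$ ($n$ large) with no rainbow $F$ and $N\ge g(n)$ colors. The whole argument reduces to exhibiting a set $U$ with $|U|\le s-2$ such that at most $1+\varepsilon$ colors occur only on edges inside $V(K_n)\setminus U$. Indeed, every color either has an edge meeting $U$ or is such an ``interior'' color, so $N\le\bigl(\binom{|U|}{2}+|U|(n-|U|)\bigr)+(1+\varepsilon)\le g(n)$, because $a\mapsto\binom{a}{2}+a(n-a)$ is increasing for $a\le n/2$; combined with $N\ge g(n)$ this forces $|U|=s-2$, all edges meeting $U$ rainbow, and exactly $1+\varepsilon$ interior colors, i.e.\ $c\in\mathcal{C}_F(n)$. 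To build $U$, fix $R\in\mathcal{R}(c,K_n)$: being rainbow, $R$ is $F$-free with $|E(R)|=N$, so by the structure of $F$-free graphs underlying Theorem~\ref{thmex} there are (for $n$ large) a set $W$ with $|W|\le s-1$ and a constant $D=D(F)$ with every vertex outside $W$ of $R$-degree at most $D$; take $W$ of minimum size, so its vertices are almost universal in $R$.

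\emph{Upper bound: the core estimates.} Two moves complete the proof. The first is \emph{rainbow completion}: if $xy\notin E(R)$ has both ends in the large set of vertices that are $R$-adjacent to all of $W$, and $e'$ is the $R$-representative of $c(xy)$, then $R-e'+xy$ is again a representing subgraph, hence $F$-free; but it is the almost-universal set $W$ together with the single edge $xy$ in an otherwise sparse region, so, since $F$ has an even component, that component can spend $xy$ and, if $|W|\ge s-1$, one embeds $F$ into $R-e'+xy$ --- choosing $xy$ so the embedding is routable around $e'$ and the $O_F(1)$ exceptional vertices --- a contradiction; hence $|W|\le s-2$. The same move now shows $\le 1+\varepsilon$ colors occur only inside $V(K_n)\setminus W$: if $\ge 2+\varepsilon$ did, then (unless they all pass through one vertex) the saving lemma applied to a suitable rainbow sparse configuration among them together with $K_{s-2}$ on $W$ produces $F$, while the ``all through one vertex $z$'' case is handled by the second move, \emph{induction on $n$}: if deleting some vertex left more than $g(n-1)$ colors we would be done by induction, so every vertex carries at least $s-1$ colors appearing only on its incident edges, and the concentration at $z$ then over-supplies it, again yielding a rainbow $F$ or allowing $U=W\cup\{z\}$. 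Taking $U=W$ (or $W\cup\{z\}$) finishes the reduction, hence the theorem.

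\emph{Main obstacle.} The outline above is the routine part; the difficulty is in securing the \emph{exact} additive constant $1+\varepsilon$ rather than merely $O(1)$. This forces a persistent case analysis according to the number of even $t_i$ and their parities, run through the saving lemma, the rainbow-completion count, and the private-color induction; most delicate is the elimination of the degenerate configurations (interior colors concentrated at one vertex; $W$-vertices not quite universal; the cherry/star/triangle shapes a sparse color class may assume), for which the generic completion step fails and one argues directly, combining the near-universality of $W$ with the private-color bound at the offending vertex. That case-by-case work is where the bulk of the proof lies.
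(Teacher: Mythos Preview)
Your outline diverges from the paper's proof in its core mechanism, and one of your load-bearing steps is not justified.

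\textbf{What the paper does.} The paper never invokes any stability result for $F$-free graphs. Instead it uses a minimum-degree reduction: if some representing graph $L_n$ has a vertex $u$ of degree at most $s-3$, delete $u$; the number of colors drops by at most $s-3$ while $f_F(n)-f_F(n-1)=s-2$, so the remaining $K_{n-1}$ carries at least $f_F(n-1)+1$ colors. Iterating, either one eventually exceeds $\binom{n-\ell}{2}$ colors (contradiction), or one lands in the regime where every representing graph has minimum degree at least $s-2$. In that regime (Lemma~\ref{lem2}) the ``center'' $U\cup W$ of size $s-2$ is produced not from a structure theorem but by a direct counting lemma (Lemma~\ref{lem1}): since $e(L_n)\ge f_F(n)$ and $L_n$ contains a copy of $F_0=F-P_{t_k}$, pigeonhole on the edges between $V(F_0)$ and its complement yields $s-2$ vertices with huge common neighbourhood. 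Claim~\ref{claim3} then pins down the $1+\varepsilon$ interior colors by explicit path-building inside $K_n[T]$.

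\textbf{Where your plan differs.} You fix a single representing graph $R$ and assert that ``by the structure of $F$-free graphs underlying Theorem~\ref{thmex}'' there exist $W$ with $|W|\le s-1$ and $D=D(F)$ such that every vertex outside $W$ has $R$-degree at most $D$. Theorem~\ref{thmex} gives only the extremal number and the unique extremal graph; it does not assert, and its proof does not immediately yield, the bounded-degree-outside-a-small-set decomposition you need at the level $e(R)\ge f_F(n)$ (which is below $\mathrm{ex}(n,F)$). Such a stability statement may well be provable, but it is a separate lemma, and without it your reduction to ``almost-universal $W$'' is unsupported. The paper's min-degree deletion trick is exactly what sidesteps this: it manufactures the min-degree hypothesis for free, after which the center is found by elementary counting rather than structural stability.

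\textbf{On the exact constant.} You correctly flag that nailing $1+\varepsilon$ is the delicate part and candidly leave it as ``case-by-case work.'' The paper's Claim~\ref{claim3} handles this concretely: assuming $\ge 2+\varepsilon$ interior colors it builds a rainbow $P_3\cup P_2$ in $K_n[T]$ (via Lemma~\ref{lemma for P2 P3}), then uses the min-degree $\ge s-2$ of \emph{every} representing graph together with the large common neighbourhood $L$ to route an explicit rainbow $F$, splitting on whether $L_n$ is connected. Your ``rainbow completion'' and ``private-color induction'' point in the right direction, but as written they do not replace this analysis; in particular your induction step (``if deleting some vertex left more than $g(n-1)$ colors we would be done'') is essentially the paper's deletion trick, but you invoke it only locally at a bad vertex $z$, whereas the paper runs it globally at the outset to secure the min-degree hypothesis everywhere, which is what makes the endgame clean.

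In short: your lower-bound argument is fine (and more explicit than the paper's), but for the upper bound you should either prove the stability decomposition you invoke, or adopt the paper's min-degree deletion to reach $\delta(L_n)\ge s-2$ first and then build the center directly by counting.
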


\section{Proof of Theorem~\ref{thmmain}}


First, we prove a useful lemma for the extremal problems of linear forests.
\begin{lem}\label{lem1}
Let $G$ be an $F$-free graph on $n$ vertices with $|V(F)|=f$. Let $F_1=\cup_{i\in L} P_{t_i}$ be a subgraph of $F$, where $L\subseteq [k]$ and $\sum_{i\in L}\lfloor t_i/2\rfloor\geq 2$, let $F_2=F-F_1$. If $G$ contains a copy of $F_1$ as a subgraph and
$$e(G)-\binom{|F_1|}{2}-\emph{ex}(n-|F_1|,F_2)\geq \left(\sum_{i\in L}\left\lfloor\frac{t_i}{2}\right\rfloor-\frac{7}{4}\right)n,$$
then any copy of $F_1$ in $G$ contains a subset $U$ of order $\sum_{i\in L}\lfloor t_i/2\rfloor-1$ with common neighborhood of size at least $2f^2+8f$ in $V(G)-U$.
\end{lem}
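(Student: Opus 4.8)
The plan is to fix an arbitrary copy of $F_1$ in $G$, call its vertex set $W$ (so $|W|=|F_1|=f_1$, say), and analyze the bipartite-plus-remainder structure of $G$ restricted to $V(G)-W$. Write $H=G[V(G)-W]$ and $B=G[W,V(G)-W]$. Since $G$ is $F$-free and already contains $F_1$ on $W$, the graph $H$ must be $F_2$-free after we delete any small vertex set that could be "absorbed'' into $W$ to extend $F_1$ into an $F$; more precisely, $H$ cannot contain $F_2$ vertex-disjoint from a small set of vertices that are heavily joined to $W$. The hypothesis gives
$$e(G)=e(H)+e(B)+\binom{f_1}{2}\text{ (at most), so } e(B)\ge e(G)-\binom{f_1}{2}-\mathrm{ex}(n-f_1,F_2)\ge\Big(\textstyle\sum_{i\in L}\lfloor t_i/2\rfloor-\tfrac74\Big)n,$$
wait — we need $e(H)\le\mathrm{ex}(n-f_1,F_2)$, which holds only if $H$ is genuinely $F_2$-free; I would handle the exceptional case (where $H$ does contain $F_2$) separately by showing it forces a copy of $F$ directly, using that $B$ is dense enough that $W$ has a large common neighborhood anyway. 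Granting $e(H)\le\mathrm{ex}(n-f_1,F_2)$, we get the displayed lower bound on $e(B)=\sum_{w\in W}d_{B}(w)$.

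Next I would extract the heavy part of $W$. Let $m=\sum_{i\in L}\lfloor t_i/2\rfloor$. Since $e(B)\le f_1\cdot(n-f_1)$ trivially but we have the much stronger lower bound $e(B)\ge(m-\tfrac74)n$, an averaging / pigeonhole argument shows that the number of vertices $w\in W$ with $d_B(w)\ge n-C$ (for an appropriate constant $C=C(f)$) is at least $m-1$: if at most $m-2$ vertices of $W$ had near-full degree into $V(G)-W$, the total would be at most $(m-2)(n-f_1)+(f_1-m+2)\cdot(\text{something}<n-C)$, which I would arrange to contradict the lower bound once $n$ is large. The threshold $C$ must be chosen so that the "light'' vertices lose at least $\tfrac14 n$ collectively — this is exactly what the $-\tfrac74$ (rather than $-2$) buys us. So we obtain a set $U\subseteq W$ with $|U|\ge m-1$ and every vertex of $U$ joined to all but at most $C$ vertices of $V(G)-W$; hence $|N_G(U)\cap(V(G)-U)|\ge n-f_1-(m-1)C$, which exceeds $2f^2+8f$ for $n$ large. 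If $|U|>m-1$ we just discard vertices to make $|U|=m-1$ exactly.

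The main obstacle I anticipate is the exceptional case analysis — namely ruling out (or directly exploiting) the situation where $H=G[V(G)-W]$ is \emph{not} $F_2$-free, so the bound $e(H)\le\mathrm{ex}(n-f_1,F_2)$ fails. In that case $V(G)-W$ contains a copy of $F_2$, and one must show that together with the copy of $F_1$ on $W$ this either yields a rainbow-irrelevant honest copy of $F$ (contradiction) unless the two copies share vertices in a controlled way; but sharing is impossible since they live in disjoint vertex sets $W$ and $V(G)-W$. So actually this case immediately contradicts $F$-freeness of $G$ — meaning $H$ \emph{is} $F_2$-free and the bound $e(H)\le\mathrm{ex}(n-f_1,F_2)$ is automatic. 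The real subtlety is then just the careful choice of the constant $C$ and verifying the averaging inequality is strict for $n\ge f(t_1,\dots,t_k)$; I would also need to double-check the edge-count bookkeeping $e(G)\le e(H)+e(B)+\binom{f_1}{2}$ and that $\mathrm{ex}(n-f_1,F_2)$ is the right comparison quantity when $F_2$ may be empty (in which case $\mathrm{ex}(n-f_1,\emptyset)=0$ and $L=[k]$, handled by the same argument with $m=\sum_i\lfloor t_i/2\rfloor$).
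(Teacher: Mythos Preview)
Your setup is correct and matches the paper: fixing a copy of $F_1$ on a set $W$ (the paper calls it $P$), observing that $G[V-W]$ is $F_2$-free (so $e(G[V-W])\le \mathrm{ex}(n-|W|,F_2)$ automatically, with no exceptional case to worry about), and deducing that the bipartite piece $B=G[W,V-W]$ has at least $(m-\tfrac74)n$ edges, where $m=\sum_{i\in L}\lfloor t_i/2\rfloor$.

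The gap is in your averaging step. You try to pigeonhole on the side of $W$, claiming that at least $m-1$ vertices of $W$ must have degree $\ge n-C$ into $V-W$. This cannot work, because $|W|=f_1=\sum_{i\in L}t_i\ge 2m$, so the lower bound $e(B)\ge(m-\tfrac74)n$ only forces the \emph{average} degree from $W$ to be about $(m-\tfrac74)n/f_1\le n/2$. Concretely, if at most $m-2$ vertices of $W$ are ``heavy'' (degree $\ge n-C$), your upper bound on $e(B)$ is
\[
(m-2)(n-f_1)+(f_1-m+2)(n-C)=f_1 n-(m-2)f_1-(f_1-m+2)C,
\]
and for this to fall below $(m-\tfrac74)n$ you would need $(f_1-m+\tfrac74)n$ bounded by a constant. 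Since $f_1\ge 2m>m-\tfrac74$, this is impossible for large $n$. (Take $F_1=P_5$: then $f_1=5$, $m=2$, the hypothesis gives only $e(B)\ge n/4$, and all five vertices of $W$ could have degree about $n/20$.)

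The paper pigeonholes on the \emph{other} side of $B$. Let $n_0$ be the number of vertices in $V-W$ with at least $m-1$ neighbours in $W$. Bounding $e(B)\le n_0\cdot f_1+(n-f_1-n_0)(m-2)$ and comparing with $e(B)\ge(m-\tfrac74)n$ gives
\[
n_0\ge\frac{n/4+f_1(m-2)}{f_1-m+2},
\]
which is linear in $n$. Each such vertex lies in the common neighbourhood of at least one $(m-1)$-subset of $W$, so by pigeonhole over the $\binom{f_1}{m-1}$ such subsets, some $(m-1)$-subset $U\subseteq W$ has at least $n_0/\binom{f_1}{m-1}\ge 2f^2+8f$ common neighbours. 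This is the missing idea: you must extract high-degree vertices from $V-W$ into $W$, not the other way around.
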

\begin{proof}
Let $G=(V,E)$ be an $F$-free graph on $n$ vertices. Assume that $G$ contains a copy of $F_1$ on subset $P$ and $p=|P|$. Let $t=\sum_{i\in L}\lfloor t_i/2\rfloor$. Since $G$ is $F$-free, $G[V-P]$ contains no copy of $F_2$. Hence $e(G[V-P])\leq \mbox{ex}(n-p,F_2)$ and the number of edges between $P$ and $V-P$ in $G$ is at least $e(G)-\binom{p}{2}-\mbox{ex}(n-p,F_2)\geq (t-\frac{7}{4})n$.
Let $n_0$ be the number of vertices in $V-P$ which have at least $t-1$ neighbors in $P$, this is,$$n_0=|\{v\in V-P: |N_P(v)|\geq t-1 \}|.$$ Then the number of edges between $V-P$ and $P$ is at most $n_0 p+(n-p-n_0)(t-2)$.
Hence
$$n_0 p+(n-p-n_0)(t-2)\geq\left(t-\frac{7}{4}\right)n.$$
So,
$$n_0\geq \frac{n/4+p(t-2)}{p-t+2}.$$
Since there are $\binom{p}{t-1}$ subsets with size $t-1$ in $P$, and $n$ is large enough, there is a subset $U$ of size $t-1$ in $P$ which has at least $n_0/\binom{p}{t-1}\geq 2f^2+8f$ common neighbors in $V-P$.
\end{proof}

\begin{lem}\label{U and W}
Let $K_n$ be a complete graph on $n$ vertices with an edge-coloring $c$. Let $U$ and $W$ be vertex disjoint subsets of $V(K_n)$ with $|U|=u$, $|W|=w$ and $u,w>0$. If there are two representing graphs $L^1_n$ and $L^2_n$ in $\mathcal{R}(c,K_n)$ such that $U$ has at least $s$ common neighbours in $L_n^1$ and $W$ has at least $s+su$ common neighbours in $L_n^2$, then there is a representing graph $L_n$ in $\mathcal{R}(c,K_n)$ such that $U$ and $W$ have at least $s$ common neighbours in $L_n$ respectively.
\end{lem}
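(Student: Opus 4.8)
The plan is to begin from the representing graph $L_n^2$, in which $W$ already has at least $s+su$ common neighbours, and to surgically edit it, using $L_n^1$ as a template, so that $U$ also acquires $s$ common neighbours without the common neighbourhood of $W$ dropping below $s$.

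First I would fix a set $S\subseteq N_{L_n^1}(U)$ with $|S|=s$ and look at the edge set $E_0=\{xv:x\in U,\ v\in S\}$, which has exactly $us$ edges and is contained in $L_n^1$. Because $L_n^1$ is a representing graph, these $us$ edges have pairwise distinct colours; let $C_0$ be the set of these colours, so $|C_0|=us$ and for each colour $\chi\in C_0$ the unique edge of $L_n^1$ of colour $\chi$ belongs to $E_0$. Now define $L_n$ by keeping, for each colour $\chi\notin C_0$, the unique edge of $L_n^2$ of colour $\chi$, and taking, for each $\chi\in C_0$, the unique edge of $L_n^1$ of colour $\chi$ instead. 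Since $L_n^1$ and $L_n^2$ each contain exactly one edge of every colour, $L_n$ again contains exactly one edge of every colour, so $L_n\in\mathcal{R}(c,K_n)$. Moreover $E_0\subseteq E(L_n)$, so every $v\in S$ is adjacent in $L_n$ to all of $U$, whence $U$ has at least $s$ common neighbours in $L_n$.

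It remains to bound the common neighbourhood of $W$ in $L_n$. Passing from $L_n^2$ to $L_n$ we only delete the old representatives in $L_n^2$ of the colours in $C_0$ — at most $us$ edges — and add edges; adding edges never decreases a common neighbourhood, so I only need to control the effect of the deletions. The crucial point is that deleting one edge $e'$ removes at most one vertex from the common neighbourhood of $W$: if $e'$ meets $W$ in a single vertex it can cost only the other endpoint, if $e'$ avoids $W$ it costs nothing, and if both endpoints of $e'$ lie in $W$ it again costs nothing, since a vertex of $W$ is never adjacent to itself and hence never lies in the common neighbourhood of $W$. Therefore the common neighbourhood of $W$ in $L_n$ has size at least $(s+su)-us=s$, as required.

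The argument is essentially a one-step exchange and I do not anticipate a real obstacle; the only places demanding care are verifying that $L_n$ is again a representing graph (which rests on the colours in $C_0$ being distinct and their $L_n^2$-representatives being distinct edges, both consequences of $L_n^1,L_n^2\in\mathcal{R}(c,K_n)$) and the "at most one lost vertex per deleted edge" estimate for the common neighbourhood of $W$.
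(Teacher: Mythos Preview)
Your argument is correct and follows essentially the same route as the paper. Both proofs fix $s$ common neighbours $S$ (the paper calls it $X$) of $U$ in $L_n^1$, form $L_n$ by replacing the $L_n^2$-representatives of the $su$ colours on the $U$--$S$ edges with those edges themselves, and then observe that at most $su$ vertices are lost from $N_{L_n^2}(W)$; the paper phrases this last step as finding a subset $Y'\subseteq N_{L_n^2}(W)$ of size $s$ whose $W$-edges avoid those $su$ colours, which is exactly your ``at most one lost common neighbour per deleted edge'' count.
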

\begin{proof}
Let $X$ with size $s$ and $Y$ with size $s+su$ be the common neighbours of $U$ in $L^1_n$ and the common neighbours of $W$ in $L^2_n$ respectively. Since there are $su$ colors between $X$ and $U$, there is a subset $Y^\prime$ of $Y$ with size at least $s$ such that the colors between $W$ and $Y^\prime$ are all different from the colors between $X$ and $U$. The result follows.
\end{proof}

The following lemma is trivial. We left its proof to the readers.
\begin{lem}\label{lemma for P2 P3}
For large $n$, $\emph{AR}(n,P_2\cup P_2)=1$ and $\emph{AR}(n,P_3\cup P_2)=2$.
\end{lem}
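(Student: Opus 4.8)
The plan is to treat the two equalities separately, proving each by a matching lower and upper bound, and to use the hypothesis that $n$ is large only to guarantee a supply of ``fresh'' vertices outside any bounded set already under consideration. Throughout I will exploit the elementary facts that a single-colored $K_n$ contains no rainbow subgraph with two edges, and that a rainbow copy of a graph with $r$ edges requires at least $r$ colors.

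For $\emph{AR}(n,P_2\cup P_2)=1$, the lower bound is immediate: color $K_n$ with one color; then $P_2\cup P_2$, having two edges, cannot be rainbow, so one color is admissible. For the upper bound I would show that any coloring using at least two colors contains a rainbow $P_2\cup P_2$. Pick two edges of distinct colors; if they are disjoint we are done, so suppose they share a vertex, say $ab$ of color $1$ and $ac$ of color $2$. Using a fresh vertex $w\notin\{a,b,c\}$ (available since $n$ is large), the edges $cw$ and $bw$ are forced, to avoid a rainbow $2K_2$ with $ab$ and with $ac$ respectively, to have colors $1$ and $2$; taking two such fresh vertices $w,w'$ then yields the disjoint differently colored pair $bw$ (color $2$) and $cw'$ (color $1$), a rainbow $P_2\cup P_2$. (Equivalently, this value is the $t=2$ instance of Theorem~\ref{thmmatching}.)

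For $\emph{AR}(n,P_3\cup P_2)=2$, the lower bound again needs no structure: any coloring with two colors cannot produce a rainbow copy of the three-edge graph $P_3\cup P_2$, so two colors are admissible. For the upper bound I assume at least three colors and derive a rainbow $P_3\cup P_2$. Since three colors exceed $\emph{AR}(n,P_2\cup P_2)=1$, there is a rainbow $P_2\cup P_2$, say $ab$ of color $1$ and $cd$ of color $2$. The first key step is to pin down edges from $\{a,b,c,d\}$ to the outside: for every vertex $x\notin\{a,b,c,d\}$, the no-rainbow hypothesis forces $c(ax),c(bx),c(cx),c(dx)\in\{1,2\}$, since for instance a color of $ax$ outside $\{1,2\}$ would make $x\text{-}a\text{-}b$ a rainbow $P_3$ disjoint from $cd$. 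The second step forces the outside--outside edges into $\{1,2\}$ as well, and the third step forces the edges inside $\{a,b,c,d\}$ into $\{1,2\}$; together these three steps confine the whole coloring to two colors, contradicting the assumption of at least three.

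The main obstacle is the second step, namely ruling out a third color hiding on an edge $xy$ with $x,y\notin\{a,b,c,d\}$, and the analogous confinement inside the four-set. The intended trick is that once all edges from $\{a,b,c,d\}$ to the outside lie in $\{1,2\}$, a hypothetical third-colored edge $xy$ propagates strong constraints: testing the rainbow-path extensions through fresh vertices $z\notin\{a,b,c,d,x,y\}$ forces $c(az)=c(bz)=1$ and $c(cz)=c(dz)=2$. But then the path $a\text{-}z\text{-}c$ uses colors $1$ and $2$ and is a rainbow $P_3$ disjoint from $xy$, so $\{a z, z c\}\cup\{xy\}$ is a rainbow $P_3\cup P_2$, the desired contradiction; the edges inside $\{a,b,c,d\}$ are eliminated by the same device using two fresh outside vertices to build the disjoint color-$1,2$ path. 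The only thing being used about $n$ is that such fresh vertices always exist, so any $n\ge 8$ (in particular all large $n$) suffices.
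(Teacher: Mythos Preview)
The paper declares this lemma trivial and omits the proof entirely, so there is no argument to compare against; your detailed case analysis is exactly the kind of verification the authors leave to the reader, and the overall plan (lower bounds by counting colors, upper bounds by locating a rainbow $2K_2$ and then propagating constraints to fresh vertices) is correct.

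One small imprecision is worth flagging in Step~3. You propose to handle an inside edge of a third color, say $c(ac)=3$, ``by the same device using two fresh outside vertices to build the disjoint color-$1,2$ path'': that is, take $ac$ as the $P_2$ and assemble a rainbow $P_3$ with colors $1$ and $2$ on the vertex set $\{b,d\}\cup\bigl(V\setminus\{a,b,c,d\}\bigr)$. This particular device does not succeed in the degenerate sub-case where every edge on $\{b,d\}\cup\bigl(V\setminus\{a,b,c,d\}\bigr)$ receives the \emph{same} color (say color~$1$), since then no rainbow $P_3$ lives there at all. The fix is immediate and stays entirely within your framework: use $ac$ as one edge of the $P_3$ instead, taking the path $a\text{--}c\text{--}d$ (colors $3,2$) together with the disjoint edge $bz$ of color~$1$ for any fresh $z$; the symmetric variant $c\text{--}a\text{--}b$ (colors $3,1$) with $dz$ of color~$2$ handles the case where the common color is~$2$. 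With this one-line adjustment your argument is complete, and in fact $n\ge 7$ already suffices.
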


Let $t_1\geq t_2\ldots \geq t_k\geq 2$ and
\begin{equation*}
  f_F(n)=\binom{\sum_{i=1}^k \lfloor t_i/ 2\rfloor -2}{2}+ \left(\sum_{i=1}^k \left\lfloor \frac{t_i}{2} \right\rfloor -2\right)\left(n-\sum_{i=1}^k \left\lfloor \frac{t_i}{2} \right\rfloor +2 \right)+1+\varepsilon,
\end{equation*}
where $\varepsilon=1$ if exactly one $t_i$ is even and $\varepsilon=0$ if at least two $t_i$ are even. Let $F_0=F-P_{t_k}.$

We begin with a minimal degree version of the anti-Ramsey problem of linear forests.

\begin{lem}\label{lem2}
Let $c$ be an edge-coloring of $K_n$ which does not contain a rainbow copy of $F$ with at least $f_F(n)$ colors and $n\geq g(t_1,\ldots,t_k)$. If the minimum degree of each representing graph is at least $\sum_{i=1}^k \lfloor t_i/2 \rfloor-2$,
then the number of colors of $c$ is exactly $f_F(n)$, and the extremal edge-coloring must be in $\mathcal{C}_F(n)$.
\end{lem}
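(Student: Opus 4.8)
Throughout write $m=\sum_{i=1}^{k}\lfloor t_i/2\rfloor$, so that the colourings in $\mathcal C_F(n)$ have a ``core'' $U$ of size $m-2$. Fix a colouring $c$ as in the statement and a representing graph $R\in\mathcal R(c,K_n)$. Since $R$ is rainbow it is $F$-free, it has $e(R)\ge f_F(n)$ edges (one per colour), and by hypothesis $\delta(R)\ge m-2$. The whole proof reduces to two tasks: (i) find a set $U\subseteq V(K_n)$ with $|U|=m-2$ all of whose incident edges of $K_n$ receive pairwise distinct colours; and (ii) show that at most $1+\varepsilon$ colours occur on the edges inside $W:=V(K_n)\setminus U$. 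Granting both, the edges meeting $U$ carry exactly $\binom{m-2}{2}+(m-2)|W|$ colours, so $c$ uses at most $\binom{m-2}{2}+(m-2)|W|+(1+\varepsilon)=f_F(n)$ colours; comparing with the hypothesis $e(R)\ge f_F(n)$ forces equality, and then $W$ must carry exactly $1+\varepsilon$ colours disjoint from those on the edges meeting $U$, which is precisely the description of $\mathcal C_F(n)$.

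For task (i) I would argue by a bootstrap built on Lemma~\ref{lem1}. First apply it to $R$ with $F_1=F_0=F-P_{t_k}$ and $F_2=P_{t_k}$: the Erd\H{o}s--Gallai bound (Theorem~\ref{eg}) for $\mbox{ex}(n-|F_0|,P_{t_k})$ together with $e(R)\ge f_F(n)$ verifies the hypothesis of Lemma~\ref{lem1} (the numerics work out for every fixed $F$ with an even component), so every copy of $F_0$ in $R$ contains a set $U_0$ of size $m-\lfloor t_k/2\rfloor-1$ with at least $2f^2+8f$ common neighbours. If $t_k\in\{2,3\}$ this $U_0$ already has size $m-2$; if $t_k\ge4$ it undershoots and must be enlarged, which I would do by exploiting that at least $(\lfloor t_k/2\rfloor-1)n-O(1)$ edges of $R$ avoid $U_0$, that $R$ is $F$-free while a copy of $F_0$ can be placed flexibly on $U_0$ and its common neighbourhood, and that Lemma~\ref{U and W} lets one pool common-neighbourhood sets coming from different members of $\mathcal R(c,K_n)$; iterating, one reaches a set $U$ with $|U|=m-2$ joined to at least $2f^2+8f$ vertices in some representing graph. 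Finally the minimum-degree hypothesis and rainbow-$F$-freeness upgrade this to the full statement of (i): if some vertex of $W$ sent fewer than $m-2$ distinct colours into $U$, or if two edges meeting $U$ repeated a colour, one could splice the offending edge(s) into a copy of $F$ that uses $U$ as a near vertex cover and the large common neighbourhood for the independent vertices, contradicting rainbow-$F$-freeness. Degenerate instances (small $k$, $F_0$ a single path, or $F$ close to $P_2\cup P_2$ or $P_2\cup P_3$) are settled directly using Theorems~\ref{eg} and~\ref{thmex} and Lemma~\ref{lemma for P2 P3}.

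For task (ii) suppose $K_n[W]$ carried at least $2+\varepsilon$ colours; I will produce a rainbow $F$. If at least two $t_i$ are even ($\varepsilon=0$), pick two vertex-disjoint, differently coloured edges inside $W$ and route one through each of two distinct even components: placing a $W$-edge on an even path $P_{t_i}$ lets it be embedded with only $\lfloor t_i/2\rfloor-1$ vertices of $U$ rather than $\lfloor t_i/2\rfloor$ (a parity computation on the minimum vertex cover of the split path), so the total demand on $U$ falls to exactly $m-2$, the remaining path-vertices are absorbed by $W$, and choosing the $U$-vertices of the embedding to avoid the finitely many repeated colours makes the copy rainbow. If exactly one $t_i$ is even ($\varepsilon=1$), then $K_n[W]$ is a complete graph with at least three colours and hence contains a $2$-coloured cherry $P_3$; routing this cherry through an odd component saves one $U$-vertex, routing one further differently coloured $W$-edge through the even component saves a second, and again the demand on $U$ drops to $m-2$, giving a rainbow $F$. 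In both cases we contradict the hypothesis, so $K_n[W]$ has at most $1+\varepsilon$ colours, which finishes the proof.

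The main obstacle is task (i). Lemma~\ref{lem1} only produces sets of size $\sum_{i\in L}\lfloor t_i/2\rfloor-1$ with $L\subseteq[k]$, which for $t_k\ge4$ is strictly smaller than $m-2$, so one must iterate while keeping precise track of the edge surplus $e(R)-f_F(n)$ and must transport common-neighbourhood information between different representing graphs through Lemma~\ref{U and W}; the bookkeeping naturally splits according to how many components of $F$ are even and how small $t_k$ is. It is exactly at this step that the minimum-degree hypothesis is indispensable: without it the complement $W$ could absorb many edges, hence many extra colours, and the total could exceed $f_F(n)$.
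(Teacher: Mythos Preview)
Your overall architecture matches the paper's: locate a core set of size $m-2$ with a large common neighbourhood in some representing graph, argue that the complement $T$ carries at most $1+\varepsilon$ colours, and finish by counting. The paper organises these as Claims~\ref{claim0}, \ref{claim2}, \ref{claim3}. Your task~(ii) is the paper's Claim~\ref{claim3}; the parity\,/\,vertex-cover heuristic you give is the right picture, though the paper carries it out by explicit edge-swapping in representing graphs (forming $L_n^3,L_n^4,L_n^5$) rather than by an abstract embedding.

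There is, however, a genuine gap in your task~(i). When $t_k\ge 4$, Lemma~\ref{lem1} applied with $F_1=F_0$ only yields a set $U_0$ of size $m-\lfloor t_k/2\rfloor-1$, and a second application on $V\setminus U_0$ yields a disjoint set $W_0$ of size $\lfloor t_k/2\rfloor-1$. Lemma~\ref{U and W} then puts both in one representing graph, but with \emph{separate} common neighbourhoods $X$ and $Y$; it does \emph{not} give a common neighbourhood for $U_0\cup W_0$. The paper supplies this missing step as Claim~\ref{claim2}, and it is precisely here that the minimum-degree hypothesis enters: if many vertices of $X$ failed to see all of $W_0$, then (since their degree is at least $|U_0\cup W_0|$) they would have neighbours in $T$, and a swap argument would produce a rainbow~$F$. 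Your ``iterating'' does not reach a single $(m-2)$-set with a large joint neighbourhood, and your ``upgrade'' paragraph places the minimum-degree hypothesis at the wrong step: proving that all edges meeting $U$ receive distinct colours is not needed as a separate argument, since it falls out of task~(ii) plus counting, whereas you do need $\delta\ge m-2$ to merge the two common neighbourhoods (and again inside the case analysis of Claim~\ref{claim3}). Once Claim~\ref{claim2} is in place your plan goes through essentially as in the paper.
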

\begin{proof}
Let $c$ be an edge-coloring of $K_n$ on vertex set $V$ with at least $f_F(n)$ colors. Since $f_F(n)>$ex$(n,F_0)$ when $n$ is sufficiently large, each representing graph in $\mathcal{R}(c,K_n)$ contains a copy of $F_0$. Let $L^1_n\in \mathcal{R}(c,K_n)$ be a representing graph. Let $|V(F)|=f$.
By Lemma \ref{lemma for P2 P3}, we may assume that $F$ is not $P_2\cup P_2$ nor $P_3\cup P_2$, so $\sum_{i=1}^{k}\lfloor t_i/2\rfloor\geq 3$.

\begin{cla}\label{claim0}
There is a representing graph $L_n\in \mathcal{R}(c,K_n)$ such that it contains two disjoint vertex subset $U$ and $W$ \footnote{$W$ can be empty set} of order $u=|U|=\sum_{i=1}^{k-1}\lfloor t_i/2\rfloor-1$ and $w=|W|=\lfloor t_k/2\rfloor-1$ such that $|N_{L_n}(U)|\geq 2f+8$ and $|N_{L_n}(W)|\geq 2f+8$ if $W\neq \emptyset$.
\end{cla}
\begin{proof}
Since $F$ is not $P_2\cup P_2$ nor $P_3\cup P_2$,  we have $\sum_{i=1}^{k-1}\lfloor t_i/2\rfloor\geq 2$ and
\begin{align*}
  &  e(L^1_n)-\binom{|V(F_0)|}{2}-\mbox{ex}(n-|V(F_0)|,P_{t_k})\\
     \geq &\binom{\sum_{i=1}^k \lfloor t_i/ 2\rfloor -2}{2}+\left(\sum_{i=1}^k \left\lfloor \frac{t_i}{2} \right\rfloor -2\right)\left(n-\sum_{i=1}^k \left\lfloor \frac{t_i}{2} \right\rfloor +2 \right)+1+\varepsilon\\
    &-\binom{\sum_{i=1}^{k-1}t_i}{2}-\frac{t_k-2}{2}\left(n-\sum_{i=1}^{k-1}t_i\right)\\
      > &\left(\sum_{i=1}^{k-1} \left\lfloor \frac{t_i}{2} \right\rfloor -\frac{7}{4}\right)n.
\end{align*}
By Lemma~\ref{lem1}, one can find a subset $U$ of $P$ with  $|U|=\sum_{i=1}^{k-1}\lfloor t_i/2\rfloor-1$ which has common neighborhoods of size at least $2f^2+8f$ in $L^1_n$.

Now we consider the subgraph $L^1_n[V-U]$. Then we have
\begin{align*}
 e(L^1_n[V-U]) \geq& e(L^1_n)-\binom{\sum_{i=1}^{k-1}\lfloor\frac{t_i}{2}\rfloor-1}{2}-\left(\sum_{i=1}^{k-1}\left\lfloor\frac{t_i}{2}
 \right\rfloor-1\right)\left(n-\sum_{i=1}^{k-1}\left\lfloor\frac{t_i}{2}\right\rfloor+1\right) \\
      \geq &\binom{\sum_{i=1}^k \lfloor t_i/ 2\rfloor -2}{2}+\left(\sum_{i=1}^k \left\lfloor \frac{t_i}{2} \right\rfloor -2\right)\left(n-\sum_{i=1}^k \left\lfloor \frac{t_i}{2} \right\rfloor +2 \right)+1+\varepsilon\\
     &-\binom{\sum_{i=1}^{k-1}\lfloor t_i/2\rfloor-1}{2}-\left(\sum_{i=1}^{k-1}\left\lfloor\frac{t_i}{2}\right\rfloor-1\right)
     \left(n-\sum_{i=1}^{k-1}\left\lfloor\frac{t_i}{2}\right\rfloor+1\right)\\
        \geq &\binom{\lfloor t_k/2\rfloor-1}{2}+\left(\left\lfloor\frac{t_k}{2}\right\rfloor-1\right)\left(n-\sum_{i=1}^{k}\left\lfloor\frac{t_i}{2}\right\rfloor+2\right)+1+\varepsilon\\
        \geq &\binom{\lfloor (t_k-1)/2\rfloor-1}{2}+\left(\left\lfloor\frac{t_k-1}{2}\right\rfloor-1\right)\left(n-\sum_{i=1}^{k-1}\left\lfloor\frac{t_i}{2}\right\rfloor-\left\lfloor\frac{t_k-1}{2}\right\rfloor+2\right)+1+\varepsilon^\prime\\                                              = &\emph{AR}\left(n-\sum_{i=1}^{k-1}\left\lfloor\frac{t_i}{2}\right\rfloor+1,P_{t_k}\right).
 \end{align*}
where $\varepsilon^\prime=1$ if $t_k$ is even or $\varepsilon^\prime=0$ if $t_k$ is odd. Moreover, the equality holds if and only if $t_k$ is odd and at least two $t_i$'s are even. Thus, by Theorem~\ref{thmpath}, it is easy to see that $c\in \mathcal{C}_F(n)$ and we are done, or $K_n$ contains a rainbow copy of $P_{t_k}$ on subset $V-U$. So there is a representing graph $L_n^2\in\mathcal{R}(c,K_n)$ such that $L_n^2[V-U]$ contains a copy of $P_{t_k}$. If $t_k\leq 3$, let $W=\emptyset$; if $t_k\geq 4$, by Lemma~\ref{lem1}, there is a subset $W$ of $V-U$ of size $\lfloor t_k/2\rfloor-1$ has common neighborhoods of size at least $2f^2+8f$ in $L^2_n$. Now, by Lemma~\ref{U and W}, there is a representing graph $L_n$ satisfying the claim, the proof is completed.
\end{proof}

Since $\sum_{i=1}^k \lfloor t_i/2 \rfloor\geq 3$, we have $|U\cup W|\geq 1$. Let $T=V-U-W$.  We may choose $X\subseteq T$ and $Y\subseteq T$ be the set of common neighbours of $U$ and $W$  in $L_n$ respectively. By Claim~\ref{claim0}, we  have  $|X|\geq f+8$ when $|U|\geq 1$ and $|Y|\geq f+8$ when $|W|\geq 1$.

\begin{cla}\label{claim2}
Let $L$ be the set of common neighbours of $U\cup W$ in $L_n$. Then $|L|\geq f+2$.
\end{cla}
\begin{proof} If $U=\emptyset$ or $W=\emptyset$, then the claim is obviously true. Let $U\neq \emptyset$  and $W\neq \emptyset$, then $t_1\geq t_2\ldots \geq t_k\geq 4$. We consider the following two cases: (a) $t_k$ is even. Let $L_n^3$ be the graph obtained from $L_n$ by adding an edge $ab$ in $K_n[Y]$ and deleting the edge $cd$ in $L_n$ colored by $c(ab)$. Suppose that there are at least  three vertices in $X^\prime=X\setminus\{a,b,c,d\}$ which are not adjacent to all vertices of $W$. Since $\delta(L_n^3)\geq |U\cup W|$, there are three vertices of $X^\prime$, say $x_1,x_2$ and $x_3$, such that $x_i$ is adjacent to $y_i\in T$ for $i=1,2,3$. If two of $\{y_1,y_2,y_3\}$ are not belong to $\{a,b\}$, then we can find a copy of $F$ in $L^3_n$ easily. Moreover, the edges in $Y^\prime=Y\setminus\{a,b,c,d,y_1,y_2,y_3,x_1,x_2,x_3\}$  can not be colored by $c(ab)$. Otherwise, the graph obtained from $L_n$ by adding an edge colored by $c(ab)$ in $Y^\prime$ and deleting the edge $cd$ contains a copy of $F$. Now let $L_n^4$ be the graph obtained from $L_n$ by adding an edge $x_4x_5$ inside $Y^\prime$ and deleting the edge colored by $c(x_4x_5)$. Note that  at least two of $x_iy_i$ for $1\leq i\leq 3$ belong to $L_n^4$ and we delete at most one edge between $\{x_1,x_2,x_3\}$ and $U$ or between  $x_4x_5$ and $W$, we can easily find a copy of $F$ in $L_n^4$, a contradiction. Thus there are at most two vertices in $X^\prime$ which are not adjacent to all vertices of $W$. Hence, we have $|L|\geq f+8-6=f+2$. (b) $t_k$ is odd. The proof of this case is similar as case (a) and be omitted. The proof of the claim is completed.\end{proof}

\begin{cla}\label{claim3}
There are at most $1+\varepsilon$ colors in $c(K_n[T])$.
\end{cla}
\begin{proof} We only prove the claim for $\varepsilon=1$, since the case $\varepsilon=0$ is much easier. Then there is exactly one $t_i$ is even. Take the representing graph $L_n$ with $e(L_n[U\cup W,T])$ maximum and $|N_{L_n}(U\cup W)|\geq f$. That is if $z_1z_2$ with $z_1\in T$ and $z_2\in U\cup W$ is not an edge of $L_n$, then $z_1z_2$ is colored by $c(z_1^\prime z_2^\prime)$, where $z_1^\prime\in T$ and $z_2^\prime \in U\cup W$.

If $L_n$ is connected, suppose that there are at least three colors in $c(K_n[T])$.
Then by Lemma~\ref{lemma for P2 P3}, we can assume that $L_n[T]$ contains a copy of $P_{3}\cup P_2$.  Let $P_2=ab$ and $P_3=xyz$. We take $e(L_n[x,U\cup W] )$ as large as possible. Thus $x$ is adjacent to $U\cup W$.
And $ab$ is connected to $U\cup W\cup \{x,y,z\}$ by a path, let $P=wPw^\prime$ be the shortest path starting from $ab$ ending at $U\cup W\cup \{x,y,z\}$ with $w\in U\cup W\cup\{x,y,z\}$.
If $w\in U\cup W$, then $V(P)\cap \{x,y,z\}=\emptyset$. If $|U\cup W|=1$, then $\{u\}= U\cup W$. Thus we can easily find a copy of $F$ (Note that $F=P_5\cup P_2$, $F=P_4\cup P_3$ or $F=P_3\cup P_3\cup P_2$). Let $|U\cup W|\geq 2$. If $e(L_n[\{x\},U\cup W] )=1$, the edges between $x$ and $U\cup W$ are colored by the same color. We can take any edge of $K_n[\{x\},U\cup W]$ for $L_n$. If $e(L_n[\{x\},U\cup W] )\geq 2$, then there are at least two edges between $L_n[\{x\},U\cup W]$. Thus in both cases, we can take an edge $xu$ between $x$ and $U\cup W$ with $u \neq w $. Thus we can easily find a copy of $F$, a contradiction.
Now we may suppose $w\in \{x,y,z\}$. If $w\in \{y,z\}$. Then $xyPw^\prime$ or $xyzPw^\prime$ contains a copy of $P_4$ ending at $x$, so one can find a copy of $F$ in $L_n$.
If $w=x$. If $xPw\prime$ contains at least four vertices, one can find a copy of $F$ in $L_n$; otherwise, we may assume that $xa$ is an edge in $L_n$. If $z$ is adjacent to $U\cup W$ in $L_n$, then $zyxa$ is a copy of $P_4$ which is connected to $U\cup W$, then $L_n$ contains a copy of $F$; if for there is no edge between $U\cup W$ and $z$ in $L_n$, then we may add  an edge $zz^\prime$ with $z^\prime\in U\cup W$ delete the edge in $L_n$ colored by $c(zz^\prime)$. Thus the new representing graph contains a copy of $F$, a contradiction.

Assume that $L_n$ is disconnected. Let $C_1$ be the component of $L_n$ containing $U\cup W$, $Z=V-V(C_1)$ and $Q=V(C_1)-U\cup W$. By the similar argument above, $c(K_n[Q]$ contains at most two colors. Let $L_n^5$ be the graph obtained from $L_n$ by adding an edge $vv^\prime$ inside $L$ and deleting the edge colored by $c(vv^\prime)$. Since $L_n^5[V(C_1)]$ contains a copy of $F_0$, $L_n^5[Z]$ is $P_{t_k}$-free. So, we have $e(L_n^5)\leq \binom{|U\cup W|}{2}+|U\cup W||Q|+2+\frac{t_k-2}{2}|Z|< f_F(n)$, a contradiction. The claim is proved.\end{proof}

Since $e(L_n)\geq f_F(n)$, by Claim~\ref{claim3}, we have $c\in \mathcal{C}_F(n)$. The proof is completed.\end{proof}

\begin{proof}[Proof of Theorem \ref{thmmain}]
Let $c$ be an edge-coloring of $K_n$ contains no rainbow copy of $F$ with at least $f_F(n)$ colors and $n\geq f(t_1,\ldots,t_k)$, where $f(t_1,\ldots,t_k)\gg g(t_1,\ldots,t_k)$. Suppose that each representing graph in $\mathcal{R}(c,K_n)$  has minimum degree at least $\sum_{i=1}^{k}\lfloor t_i/2\rfloor-2$. Hence, by Lemma \ref{lem2}, we have the number of edge-coloring in $c$ is $f_F(n)$ and $c\in \mathcal{C}_F(n)$.

Now, we may assume that there is a representing graph $L_n\in \mathcal{R}(c,K_n)$ with $\delta(L_n)\leq \sum_{i=1}^{k}\lfloor t_i/2\rfloor-3$. So there is a vertex $u_n$ in $V$ with degree at most $\sum_{i=1}^{k}\lfloor t_i/2\rfloor-3$ in $L_n$. Let $G^n= K_n$ and $G^{n-1}= K_n- u_n$. Then $G^{n-1}$ is an edge-colored completed graph on $n-1$ vertices with at least $f_F(n-1)+1$ colors. If each representing  graph in $R(c,G^{n-1})$  has minimum degree at least $\sum_{i=1}^{k}\lfloor t_i/2\rfloor-2$, then similar as the argument above, we have $G^{n-1}$ contains a rainbow copy of $F$. Hence, there is a vertex $u_{n-1}$ in $G^{n-1}$ with degree at most $\sum_{i=1}^{k}\lfloor t_i/2\rfloor-2$. Thus we may construct a sequence of graphs $G^n,G^{n-1},\ldots,G^{n-\ell}$ such that the number of coloring of $G^{n-\ell}$ is at least $f_F(n-\ell)+\ell$ (note that $f(t_1,\ldots,t_k)\gg g(t_1,\ldots,t_k)$). Note that there are at most ${n-\ell \choose 2}$ colors in $G^{n-\ell}$, we  get a contradiction when $\ell$ is large.\end{proof}


\begin{thebibliography}{99}

\bibitem{Bushwa2011} N.~Bushaw and N.~Kettle, Tur\'{a}n numbers of multiple paths and equibipartite forests, {\em Combin. Probab. Comput.} 20 (2011), 837-853.
\bibitem{erdos1959path} P. Erd\H{o}s and T. Gallai, On maximal paths and circuits of graphs. {\em Acta Math. Acad. Sci. Hungar.} 10 (1959), 337-356.
\bibitem{erdossim1975} P.~Erd\H{o}s, M.~Simonovits and V.~S\'{o}s, Anti-Ramsey theorems, Colloq Math Soc Janos Bolyai 10 (1975) 633-643.
\bibitem{meilu2019anti} C. Fang, E. Gy\H{o}ri, M. Lu and J. Xiao, On the anti-Ramsey number of forests, arXiv:1908.04129.
\bibitem{gilboa2016anti} S. Gilboa, Y. Roditty, Anti-Ramsey numbers of graphs with small connected components, {\em Graphs Combin.} 32 (2016), 649-662.
\bibitem{hliu2013turan} B. Lidicky, H. Liu and C. Palmer, On the Tur\'{a}n number of forests, {\em Electron. J. Combin} 20 (2) (2013), 62.
\bibitem{simsos1984anpath} M. Simonovits and V.T. S\'{o}s, On restricted coloring of $K_n$, {\em Combinatorica} 4 (1) (1984), 101-110.
\bibitem{ingo2004matchings} I. Schiermeyer, Rainbow numbers for matchings and complete graphs, {\em Discrete Math.} 286 (2004), 157-162.
\bibitem{chen2009matchings} H. Chen, X. Li and J. Tu, Complete solution for the rainbow number of matchings, {\em Discrete Math.} 309 (10) (2009), 3370-3380.
\bibitem{Fujita2010survey} S. Fujita, C. Magnant and K. Ozeki, Rainbow generalizations of Ramsey theory: a survey. {\em Graphs Combin.} 26 (2010), 1-30.
\bibitem{Yuan} L.~Yuan and X.~Zhang, The Tur\'{a}n number of  disjoint copies of paths, {\em  Discrete Mathematics} 340(2) (2017), 132-139.
\end{thebibliography}
\end{document}